\newcommand{\Qnd}{\mathsf{Qnd}}
\newcommand{\Ker}{\mathsf{Ker}}
\newcommand{\Qndt}{\mathsf{Qnd}^{\ast}}
\newcommand{\Inn}{\mathsf{Inn}}
\newcommand{\Aut}{\mathsf{Aut}}
\newcommand{\Eq}{\mathsf{Eq}}
\theoremstyle{plain}
\newtheorem{theorem}{Theorem}[section]
\newtheorem{lemma}[theorem]{Lemma}
\newtheorem{proposition}[theorem]{Proposition}
\newtheorem{corollary}[theorem]{Corollary}
\theoremstyle{definition}
\newtheorem{definition}[theorem]{Definition}
\theoremstyle{remark}
\newtheorem{remark}[theorem]{Remark}
\def\cartesien{%
    \ar@{-}[]+R+<6pt,-1pt>;[]+RD+<6pt,-6pt>%
    \ar@{-}[]+D+<1pt,-6pt>;[]+RD+<6pt,-6pt>%
  }
\begin{document}

\title{ON FACTORISATION SYSTEMS FOR SURJECTIVE QUANDLE HOMOMORPHISMS}

\author{VALERIAN EVEN and MARINO GRAN }

\address{Institut de Recherche en Math\'ematique et Physique, \\  Universit\'e catholique de Louvain, \\ Chemin du Cyclotron 2, 1348 Louvain-la-Neuve, Belgium.}

\begin{abstract}
We study and compare two factorisation systems for surjective homomorphisms in the category of quandles. The first one is induced by the adjunction between quandles and trivial quandles, and a precise description of the two classes of morphisms of this factorisation system is given. In doing this we observe that a special class of congruences in the category of quandles always permute in the sense of the composition of relations, a fact that opens the way to some new universal algebraic investigations in the category of quandles.
The second factorisation system is the one discovered by E. Bunch, P. Lofgren, A. Rapp and D. N. Yetter. We conclude with an example showing a difference between these factorisation systems. \\

\noindent Mathematics Subject Classification 2000: 57M27, 18A32, 08B05
\end{abstract}
\keywords{Quandle; factorisation system; permutability of congruences; closure operator.}

\maketitle
\section*{Introduction}

A \emph{quandle} is a set $A$ equipped with two binary operations $\lhd$ and $\lhd^{-1}$ such that the following identities hold:
\begin{itemize}
\item[(A1)] $a \lhd a = a = a \lhd^{-1}a$ for all $a \in A$ (idempotency);
\item[(A2)] $(a \lhd b) \lhd^{-1} b = a = (a \lhd^{-1} b) \lhd b$ for all $a,\ b \in A$ (right invertibility);
\item[(A3)] $(a \lhd b) \lhd c = (a \lhd c) \lhd (b \lhd c)$ and $(a \lhd^{-1} b) \lhd^{-1} c = (a \lhd^{-1} c) \lhd^{-1} (b \lhd^{-1} c)$ for all $a,\ b,\ c \in A$ (self-distributivity).
\end{itemize}
This structure was introduced independently in the 1980's by S.~V.~Matveev \cite{M} and by D.~Joyce \cite{J} who first named such a structure a quandle. One of the first goals of this structure was to encode properties of group conjugation in order to find a suitable tool to study objects like the Wirtinger presentation of a knot group. The knot quandle is an invariant allowing one to distinguish two knots up to orientation (see the survey \cite{Carter} for an introduction to this topic, for instance).

A map $f \colon A \rightarrow B$ from a quandle $A$ to another quandle $B$ is a quandle homomorphism when  it preserves the operations: $f(a \lhd a') = f(a) \lhd f(a')$ \linebreak and $f(a \lhd^{-1}a') = f(a) \lhd^{-1} f(a')$.
 Quandles and quandle homomorphisms form a category, denoted by $\Qnd$, which is actually a \emph{variety} in the sense of universal algebra, namely a class of algebras determined by some (finitary) operations satisfying suitable identities, with morphisms preserving these operations \cite{BS}. Of course, in the case of $\Qnd$, there are just two binary operations $\lhd$ and~$\lhd^{-1}$, which are required to satisfy the identities (A1), (A2) and (A3) above. The category $\Qnd$ contains the subcategory $\Qndt$ whose objects are the \emph{trivial quandles}: these are the quandles satisfying the additional identity $a \lhd b =a$. Of course, any set can be uniquely endowed with a trivial quandle structure.
 
 There is an inclusion functor $U \colon \Qndt \rightarrow \Qnd$, which simply ``forgets'' the fact that a quandle is trivial. This functor has a left adjoint 
  $\pi_0 \colon \Qnd \rightarrow \Qndt$ associating with any quandle its set of connected components (seen as a trivial quandle):
   \begin{equation}\label{adj}
\xymatrix{\Qnd \ar@/^1pc/[rr]^{\pi_0} & \perp & \ar@/^1pc/[ll]^{U}\Qndt.} \tag{A}
\end{equation}
In order to explain the definition of the functor $\pi_0 \colon \colon \Qnd \rightarrow \Qndt$, let us now recall some elementary facts about quandles.
  The identities (A2) and (A3) in the definition of a quandle $A$ imply that the right actions, denoted by~$\rho_b \colon A \rightarrow~A$ and defined by~$\rho_b(a) = a \lhd b$ for all $a \in A$, are automorphisms (=bijective quandle homomorphisms). We write~$\Inn(A)$ for the subgroup of the group $\Aut(A)$ of automorphisms of $A$ generated by all such $\rho_b$, with~$b \in A$. $\Inn(A)$ is called the subgroup of \emph{inner automorphisms} of $A$. 
A quandle $A$ is \emph{connected} if $\Inn(A)$ acts transitively on $A$. A \emph{connected component} of $A$ is an orbit under the action of $\Inn(A)$. Two elements~$a$ and~$b$ of $A$ are in the same orbit if one can find a chain of elements~$a_i \in A$, for~$1~\leq~i~\leq~n$, linking the elements~$a$ and $b$
\[a \lhd^{\alpha_1} a_1 \lhd^{\alpha_2} a_2 \dots \lhd^{\alpha_n} a_n = b,\] where, by convention, one writes
\[a \lhd^{\alpha_1} a_1 \dots \lhd^{\alpha_n} a_n := (\dots ((a \lhd^{\alpha_1} a_1) \lhd^{\alpha_2} a_2) \dots )\lhd^{\alpha_n} a_n\] with $\lhd^{\alpha_i} \in \{\lhd, \ \lhd^{-1}\}$ for all $1 \leq i \leq n$.

The functor $\pi_0 \colon \Qnd \rightarrow \Qndt$ sends a quandle $A$ to its set $\pi_0(A)$ of connected components, this latter being seen as a trivial quandle.
This functor $\pi_0$ is left adjoint of the inclusion functor $U$, and the $A$-component of the unit $\eta_A \colon A \rightarrow U \pi_0 (A)$ of the adjunction is simply given by the canonical projection sending an element of the quandle to its connected component.

The reflector $\pi_0  \colon \Qnd \rightarrow \Qndt$ does not preserve \emph{all} pullbacks. Nevertheless, this functor still has some nice left exactness properties \cite{E}, in the sense that it preserves a suitable class of pullbacks (see Theorem \ref{Admissible}).
This fact implies the existence of a canonical factorisation system $(\mathcal{E},\mathcal{M})$ for surjective homomorphisms of quandles associated with the adjunction (\ref{adj}). We describe this factorisation system in Section~\ref{factsurj}, by using an important property of permutability of a class of congruences in $\Qnd$ (Lemma \ref{permut}),  explicitly described in Section \ref{locapermut}, that is of independent interest. This factorisation system $(\mathcal{E},\mathcal{M})$ satisfies a characteristic property of the so-called \emph{reflective} ones \cite{CHK}: $\mathcal E$ is the class of surjective homomorphisms which are inverted (= sent to an isomorphism) by the reflector $\pi_0 \colon \Qnd \rightarrow \Qndt$, and  for two composable surjective homomorphisms $f$ and $g$, then~$g \in \mathcal{E}$ whenever $f \circ g \in \mathcal{E}$ and~$f \in \mathcal{E}$. The class $\mathcal M$ is the class of trivial extensions (also called trivial coverings) in the sense of categorical Galois theory~\cite{BJ,JK} (see also~\cite{E1,E2,E}).


In the last section we turn our attention to another factorisation system for surjective homomorphisms which was considered by E. Bunch, P. Lofgren, A. Rapp, and D. N. Yetter in \cite{BLRY}. We conclude the article with an example showing that this latter factorisation system, unlike the previous one, does not satisfy the typical property of reflective factorisation systems.

\section{Congruences and local permutability}\label{locapermut}
In this first section we prove that there is a special class of congruences in the category $\Qnd$ of quandles which \emph{permute} in the sense of the composition of relations, with any other congruence on the same quandle. Such congruences naturally arise from the adjunction \eqref{adj} as kernel congruences of the components of the unit of the adjunction, as we explain below, and they have been considered in \cite{BLRY}, for a different purpose.

Adopting the usual terminology from universal algebra we call an equivalence relation $R \subset A \times A$ on (the underlying set of) a quandle $A$, a \emph{congruence}, if it has the additional property that $R$ is also a subquandle of the product quandle~$(A~\times~A, \lhd, \lhd^{-1})$, so that for any $(a,b) \in R$ and $(a',b') \in R$ both the elements $$(a,b) \lhd (a',b') = (a \lhd a', b\lhd b')$$ and $$(a,b) \lhd^{-1} (a',b') = (a \lhd^{-1} a', b\lhd^{-1} b')$$
belong to the relation $R$.

Let us now recall how to associate a congruence with any subgroup of the group~$\Inn(A)$ of inner automorphisms of a quandle $A$:
\begin{definition}
If $N$ is a subgroup of $\Inn(A)$, one defines an equivalence relation~$\sim_N$ on $A$ by setting: $a \sim_N b$ if and only if $a$ and $b$ lie in the same orbit via the induced action of $N$ on $A$.
\end{definition}
As shown in \cite{BLRY} (Theorem $6.1$), the equivalence relation~$\sim_N$ is a \emph{congruence} on~$A$ if and only if $N$ is a \emph{normal} subgroup of $\Inn(A)$. 

We are now going to show that this kind of congruences always \emph{permute}, in the sense of the composition of relations, with any other congruence in the variety $\Qnd$. 
\begin{definition}\label{relational}
Given two congruences $R$ and $S$ on a quandle $A$, their (relational) composite $S \circ R$ is defined as the following relation on $A$:
$$S \circ R = \{ (a,b) \in A \times A \, \mid \exists c \in A \  \mathrm{with}  \, (a,c) \in R\, \, \mathrm{and } \, \, (c, b) \in S \}.$$
\end{definition}
The subset $S \circ R$ of $A \times A$ is always a subquandle of $A \times A$, and it is reflexive and symmetric. When the congruences $R$ and $S$ permute, i.e. $S \circ R= R \circ S$, then~$S \circ R$ is also transitive, and it is a congruence on~$A$.
In general, however, there is no reason for two congruences $R$ and $S$ on the same quandle $A$ to permute in $\Qnd$.

\begin{lemma}\label{permut}
Let $A$ be a quandle, $R$ a congruence on $A$, and 
$N$ a normal subgroup of $\Inn(A)$. Then the congruences $R$ and $\sim_N$ permute:
$$\sim_N \circ R = R \circ \sim_N.$$\end{lemma}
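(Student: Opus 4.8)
The plan is to reduce everything to a single structural observation: any congruence $R$ on $A$ is stable under the diagonal action of the whole group $\Inn(A)$, in the sense that $(a,c) \in R$ implies $(g(a), g(c)) \in R$ for every $g \in \Inn(A)$. First I would establish this. Since $R$ is reflexive, all diagonal pairs $(b,b)$ lie in $R$; since $R$ is a subquandle of $A \times A$, for any $(a,c) \in R$ and any $b \in A$ we get $(a,c) \lhd (b,b) = (a \lhd b, c \lhd b) \in R$ and likewise $(a \lhd^{-1} b, c \lhd^{-1} b) \in R$. In other words $R$ is closed under applying each generator $\rho_b$ and its inverse simultaneously to both coordinates. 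As $\Inn(A)$ is generated by the $\rho_b$, a straightforward induction on word length gives the claimed invariance under every $g \in \Inn(A)$.

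With this in hand, the inclusion $\sim_N \circ R \subseteq R \circ \sim_N$ is short. Suppose $(a,b) \in \sim_N \circ R$, so that there is some $c \in A$ with $(a,c) \in R$ and $(c,b) \in \sim_N$. The latter means $c$ and $b$ lie in the same $N$-orbit, i.e. $b = n(c)$ for some $n \in N$. Applying the invariance above to $n \in N \subseteq \Inn(A)$ yields $(n(a), n(c)) \in R$, that is, $(n(a), b) \in R$. On the other hand $a$ and $n(a)$ lie in the same $N$-orbit, so $(a, n(a)) \in \sim_N$. Taking $n(a)$ as the new intermediate element witnesses $(a,b) \in R \circ \sim_N$.

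The reverse inclusion follows by the symmetric argument: given $(a,b) \in R \circ \sim_N$ with an intermediate element $d$ satisfying $(a,d) \in \sim_N$ and $(d,b) \in R$, write $d = n(a)$ with $n \in N$ and apply the invariance to $n^{-1} \in N$ (here we use that $N$ is a subgroup, so it is closed under inverses). Since $n^{-1}(d) = a$, this produces $(a, n^{-1}(b)) \in R$, while $n^{-1}(b)$ lies in the same $N$-orbit as $b$, so $(n^{-1}(b), b) \in \sim_N$ and hence $(a,b) \in \sim_N \circ R$. Combining the two inclusions gives the equality.

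I expect the only real content to be the first paragraph: the main point is that congruences, being reflexive subquandles of $A \times A$, are automatically invariant under the entire inner automorphism group. This is the step that must be set up carefully, since it is precisely the interaction between the subquandle axioms and reflexivity that makes the generators $\rho_b^{\pm 1}$ act compatibly on $R$. Once this invariance is secured, permutability with any orbit relation $\sim_N$ becomes essentially automatic, the argument amounting to nothing more than transporting the intermediate witness by $n$ or $n^{-1}$.
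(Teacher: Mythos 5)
Your proof is correct and follows essentially the same route as the paper's: both reduce the claim to transporting the intermediate witness by the automorphism $n \in N$, using the fact that a congruence, being a reflexive subquandle of $A \times A$, is stable under the diagonal action of $\Inn(A)$ (a fact the paper uses tacitly in writing $(a^n,c^n)\in R$, and which you prove explicitly). The only cosmetic difference is that the paper deduces the reverse inclusion from the symmetry of the two relations instead of rerunning the argument with $n^{-1}$.
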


\begin{proof}
Let $(a,b)\in\ \sim_N \circ R$, so that there exists $c \in A$ such that $(a,c)\in R$ \linebreak and $(c,b) \in\ \sim_N$. In particular, there is an automorphism $n \in N$ such that its action~$c^n$ on the element $c$ is~$b$, i.e. $c^n=b$. It follows that $(a,b) \in R \circ \sim_N$, since $(a, a^n) \in\ \sim_N$ and~$(a^n,b)= (a^n, c^n) \in R$. Accordingly, one has the inclusion~$\sim_N \circ R \subset R \circ \sim_N$, and then the equality $$\sim_N \circ R = R \circ \sim_N.$$
\end{proof}

\begin{remark}
It is well known that, for a general variety $\mathbb V$ of algebras, the permutability of the composition of congruences on any algebra is equivalent to the existence of a ternary term $p(x,y,z)$ satisfying the identities $p(x,y,y)=x$ \linebreak and $p(x,x,y)=y$. This is a classical theorem due to A.I. Mal'tsev (see \cite{BS}, Theorem $12.2$, for example, for a proof). When this is the case one says that $\mathbb V$ is a Mal'tsev variety \cite{S}. Any variety of algebras whose theory contains the operations and identities of the theory of groups is a Mal'tsev variety: it suffices to choose for~$p$ the term $p(x,y,z)= x \cdot y^{-1} \cdot z$. The variety $\Qnd$ of quandles is \emph{not} a Mal'tsev variety, and for this reason we find it interesting to observe that the special type of congruences described in the Lemma \ref{permut} still permute in $\Qnd$.
\end{remark}
Given a surjective homomorphism $f \colon A \to B$ in $\Qnd$, we write $\Eq(f)$ for its \emph{kernel congruence}  $\Eq(f)= \{ (a,b) \in A \times A \, \mid \, f(a) =f(b) \}$. 

When $\Eq(f) = \sim_N$ for a normal subgroup $N$ of $\Inn(A)$, one always has that~$\sim_N = \sim_{\Ker(\Inn(f))}$ (see Theorem $7.1$ in \cite{BLRY}).
This observation suggests to consider the following class of morphisms:
\begin{definition}
 $$\mathcal{E}_1 = \{f \colon A \rightarrow B \, \mathrm{in} \, \Qnd \, \vert \,  f \,\mathrm{ is \, a \, surjective \, homomorphism \, and \, }  \Eq(f) = \sim_{\Ker(\Inn(f))} \,   \}$$ 
\end{definition}
Thanks to Lemma \ref{permut} we know that the kernel congruences of the arrows in the class $\mathcal{E}_1$ have the strong property that they permute with any other congruence. \\

\begin{remark}\label{permutabilityofeta}
Any kernel congruence $\sim_{\Inn(A)} $ of the $A$-component $\eta_A \colon A \rightarrow U\pi_0 (A)$ of the unit $\eta$ of the adjunction between $\Qnd$ and $\Qndt$ belongs to the class ${\mathcal E}_1$. From now on, we shall drop the full inclusion $U \colon \Qndt \rightarrow \Qnd$ from the notations. For instance, we shall write $\eta_A \colon A \rightarrow \pi_0(A)$ for the $A$-component of the unit of the adjunction.
\end{remark}

In order to prove a remarkable property of a special type of pushouts in the category $\Qnd$ we need to fix some more notation. Given a homomorphism~$f \colon A \rightarrow~B$ one writes $f$ for the relation 
 $$
 \xymatrix{
&  A \ar[dl]_{1_A} \ar[dr]^f &\\
A& & B
 }
 $$
 representing its graph: $f = \{(a,f(a))\mid\ a\in A\}$. The opposite relation
  $$
 \xymatrix{
&  A \ar[dl]_{f} \ar[dr]^{1_A}  &\\
B& & A,
 }
 $$
denoted by $f^o$, is given by $f^o = \{(f(a),a) \mid a\in A\}$.

Given a relation $R$ on $A$ and a homomorphism~$f \colon A \to B$, the direct image $$f(R) = \{ (f(a),f(a')) \, \mid \, (a,a')\in R \} \subset B \times B$$ of $R$ by $f$ is also given by the relational composite $$f(R) = f \circ R \circ f^o,$$ 
where the composition of relations is the obvious extension of Definition \ref{relational} to general relations:
\begin{eqnarray}
  f \circ R \circ f^o & = & \{ (x,y) \in B \times B \, \mid \,\exists a\in A,  \exists a' \in A \, \mathrm{\, with} \,  (x,a) \in f^o, (a, a') \in R\, ,  \, (a', y) \in f\} \nonumber \\
   &=&\{ (x,y) = (f(a), f(a')) \in B \times B \, \mid \, (a,a') \in R \} \nonumber \\
  &=& f(R). \nonumber 
  \end{eqnarray}

A homomorphism $f \colon A \rightarrow B$ is surjective if and only if~$f~\circ~f^o =~\Delta_B$, where $$\Delta_B = \{ (b,b) \, \mid \, b \in B\}$$ is the equality relation on $B$. The kernel congruence $\Eq (f)$ of a homomorphism $f$ can be written as the composite $f^o \circ f$. 

 Lemma \ref{permut} implies a useful property of a special type of pushouts in $\Qnd$ (see \cite{CKP} for a general study of permutability of equivalence relations in regular categories). 
\begin{lemma}\label{specialPushouts}
Let 
$$ \xymatrix{A \ar[r]^f \ar[d]_g & B \ar[d]^{\overline{g}}  \\
C \ar[r]_{\overline{f}} & D
}$$
be a pushout of surjective homomorphisms in $\Qnd$ with the property that $f \in \mathcal{E}_1$. Then the canonical factorisation $(g,f) \colon A \rightarrow C \times_D B$ to the pullback of $\overline{f}$ and $\overline{g}$ is a surjective homomorphism.
\end{lemma}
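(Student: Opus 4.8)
The plan is to translate the surjectivity of the comparison map into a purely congruence-theoretic statement and then invoke Lemma \ref{permut}. Since $f$ and $g$ are surjective homomorphisms in the variety $\Qnd$, we may identify $B$ with the quotient $A/\Eq(f)$ and $C$ with $A/\Eq(g)$, the maps $f$ and $g$ being the canonical projections. The pushout $D$ is then the quotient $A/(\Eq(f)\vee\Eq(g))$ by the join (in the lattice of congruences) of the two kernel congruences, with $\overline{f}$ and $\overline{g}$ the induced projections; the common composite $A\to D$ has kernel congruence exactly $\Eq(f)\vee\Eq(g)$.

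First I would describe the pullback explicitly. Writing $P = C\times_D B$, every element of $P$ is a pair $(g(a_1),f(a_2))$ with $a_1,a_2\in A$ subject to $\overline{f}(g(a_1))=\overline{g}(f(a_2))$, i.e. $(a_1,a_2)\in\Eq(f)\vee\Eq(g)$. The comparison map $(g,f)$ sends $a$ to $(g(a),f(a))$, so a pair $(g(a_1),f(a_2))$ lies in its image precisely when there is some $a\in A$ with $(a_1,a)\in\Eq(g)$ and $(a,a_2)\in\Eq(f)$, that is, when $(a_1,a_2)\in\Eq(f)\circ\Eq(g)$. As the composite is always contained in the join, this shows that $(g,f)$ is surjective if and only if $\Eq(f)\vee\Eq(g)=\Eq(f)\circ\Eq(g)$.

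It then remains to verify this equality. Because $f\in\mathcal{E}_1$, its kernel congruence is of the form $\Eq(f)=\sim_N$ for a normal subgroup $N$ of $\Inn(A)$, so Lemma \ref{permut} applies to the pair $\sim_N$ and $R=\Eq(g)$ and yields $\Eq(f)\circ\Eq(g)=\Eq(g)\circ\Eq(f)$. As recalled before Lemma \ref{permut}, the relational composite of two congruences is always a reflexive and symmetric subquandle of $A\times A$, and permutability makes it transitive; hence $\Eq(f)\circ\Eq(g)$ is a congruence. Being a congruence that contains both $\Eq(f)$ and $\Eq(g)$ while being contained in their join, it must coincide with $\Eq(f)\vee\Eq(g)$, which is exactly the equality needed.

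The genuine content of the argument is concentrated in Lemma \ref{permut}: the reduction of surjectivity of the comparison map to the identity $\Eq(f)\vee\Eq(g)=\Eq(f)\circ\Eq(g)$ is a standard piece of universal-algebra bookkeeping, and the passage from permutability to ``composite equals join'' is routine. The main point to be careful about is the identification of the pushout $D$ with $A/(\Eq(f)\vee\Eq(g))$ together with the correct reading of membership in the pullback $P$ in terms of representatives $a_1,a_2$; once these are set up, the permutability furnished by the hypothesis $f\in\mathcal{E}_1$ does all the remaining work.
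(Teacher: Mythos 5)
Your proof is correct and follows essentially the same route as the paper's: both arguments rest on identifying the pushout $D$ with $A/(\Eq(f)\vee\Eq(g))$ and on reducing surjectivity of the comparison map $(g,f)$ to the equality $\Eq(f)\vee\Eq(g)=\Eq(f)\circ\Eq(g)$, which follows from Lemma \ref{permut} because $\Ker(\Inn(f))$ is a normal subgroup of $\Inn(A)$. The only difference is presentational: the paper performs the bookkeeping in the calculus of relations (composing $t^o\circ t = f^o\circ f\circ g^o\circ g$ with $f$ on the left and $g^o$ on the right), and cites \cite{CKP} for ``composite equals join,'' whereas you argue elementwise on representatives and derive that equality directly from permutability.
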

\begin{proof}
The fact that $f \in \mathcal{E}_1$ implies that $$\Eq(f) \circ \Eq(g)= \Eq(g) \circ \Eq(f) = \Eq(f) \vee \Eq(g)$$ is the supremum of $\Eq(f)$ and $\Eq(g)$ as congruences on $A$ \cite{CKP}. Moreover, the fact that the square is a pushout implies that 
$\Eq(t) = \Eq(f) \vee \Eq(g)$, with $t = \overline{f} \circ g$.
Consequently, \begin{eqnarray}\label{Equat} t ^o \circ t =  f^o \circ f \circ g^o \circ g.\end{eqnarray}
The direct image of $(g,f) \colon A \rightarrow C \times_D B$ is given by the relation $f  \circ g^o$, 
whereas the relation $(C \times_D B, \pi_1, \pi_2)$ given by the pullback projections  is ${\overline{g}}^o \circ \overline{f}$. Finally, by composing \eqref{Equat} on the left by $f$ and on the right by $g^o$ one obtains the equality
\begin{eqnarray}
f \circ t ^o \circ t \circ g^o &= & f \circ  f^o \circ f \circ g^o \circ g \circ g^o \nonumber 
\end{eqnarray}
so that \begin{eqnarray}
f \circ f^o \circ \overline{g}^o \circ \overline{f} \circ g \circ g^o &= &  f  \circ g^o   \nonumber 
\end{eqnarray}
(since $f \circ f^o=\Delta_B$ and $g \circ g^o =  \Delta_C$), and then
\begin{eqnarray}
{\overline{g}}^o \circ \overline{f} &= & f  \circ g^o, \nonumber 
\end{eqnarray}
as desired.
\end{proof}
In particular, the following useful result holds:
\begin{corollary}\label{QuandleAdj}
For any surjective homomorphism $f \colon A \rightarrow B$ in $\Qnd$ the commutative square 
\begin{equation}\vcenter{\label{canonical}
\xymatrix{A \ar[r]^f \ar[d]_{\eta_A} & B \ar[d]^{\eta_B} \\
 \pi_0(A) \ar[r]_{\pi_0(f)} &  \pi_0 (B)
}}
 \end{equation}
 where $\eta$ is the unit of the adjunction \eqref{adj} 
 has the property that the canonical arrow~$( \eta_A,f) \colon A \rightarrow  \pi_0(A) {\times}_{\pi_0 (B)} B$ to the pullback (of $\pi_0 (f)$ and $\eta_B$) is surjective.
\end{corollary}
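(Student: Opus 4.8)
The plan is to recognise the square \eqref{canonical} as a pushout of surjective homomorphisms in $\Qnd$ and then to apply Lemma \ref{specialPushouts}. The square commutes by naturality of the unit $\eta$, and each of its four edges is a surjective homomorphism: $\eta_A$ and $\eta_B$ are surjective because components of the unit always are, $f$ is surjective by hypothesis, and $\pi_0(f)$ is surjective because $f$ is (every connected component of $B$ contains an element of the image of $f$). By Remark \ref{permutabilityofeta} the morphism $\eta_A$ lies in $\mathcal{E}_1$. Reading the pushout with $\eta_A$ as its \emph{top} edge and $f$ as its \emph{left} edge, so that the distinguished $\mathcal{E}_1$-morphism required by Lemma \ref{specialPushouts} is precisely $\eta_A$, the Lemma yields that the canonical map $(f,\eta_A)\colon A \to B \times_{\pi_0(B)} \pi_0(A)$ to the pullback of $\eta_B$ and $\pi_0(f)$ is surjective. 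Since this pullback is $\pi_0(A) \times_{\pi_0(B)} B$ up to interchanging the two factors, the arrow $(\eta_A, f)$ is surjective, as desired.

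The step I expect to be the main obstacle is the verification that \eqref{canonical} genuinely is a pushout in $\Qnd$; this is not formal, and it is exactly here that the surjectivity of $f$ is indispensable (for a non-surjective $f$ the naturality square of the unit need not be a pushout). I would check the universal property by hand. Let $X$ be a quandle equipped with homomorphisms $u \colon \pi_0(A) \to X$ and $v \colon B \to X$ satisfying $u \circ \eta_A = v \circ f$; one must produce a unique $w \colon \pi_0(B) \to X$ with $w \circ \eta_B = v$ and $w \circ \pi_0(f) = u$. As $\eta_B$ is surjective, $w$ is forced to send a component $[b]$ of $B$ to $v(b)$, so the entire problem reduces to showing that $v$ is constant on the connected components of $B$.

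This is precisely where the hypothesis on $f$ is used. If $b' = b \lhd^{\alpha_1} b_1 \lhd^{\alpha_2} \cdots \lhd^{\alpha_n} b_n$ lies in the component of $b$, write $b = f(a)$ and $b_i = f(a_i)$ using the surjectivity of $f$, so that $b' = f\big( a \lhd^{\alpha_1} a_1 \cdots \lhd^{\alpha_n} a_n\big)$ with $a \lhd^{\alpha_1} a_1 \cdots$ in the component of $a$; then $v(b') = u\big(\eta_A(a \lhd^{\alpha_1} a_1 \cdots)\big) = u(\eta_A(a)) = v(b)$, using $u \circ \eta_A = v \circ f$ together with the fact that $\eta_A$ collapses each component to a point. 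Hence $w([b]) := v(b)$ is well defined. It is moreover a quandle homomorphism: in the trivial quandle $\pi_0(B)$ one has $[b_1] \lhd [b_2] = [b_1]$, so it suffices to note that $v(b_1) \lhd v(b_2) = v(b_1 \lhd b_2) = v(b_1)$ because $b_1 \lhd b_2$ lies in the component of $b_1$ (and symmetrically for $\lhd^{-1}$). The relations $w \circ \eta_B = v$ and $w \circ \pi_0(f) = u$ are then straightforward, establishing the pushout property and completing the proof.
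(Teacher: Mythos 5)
Your proof is correct and takes essentially the same route as the paper: recognise the square \eqref{canonical} as a pushout of surjective homomorphisms and then apply Lemma \ref{specialPushouts} with the $\mathcal{E}_1$-morphism $\eta_A$ supplied by Remark \ref{permutabilityofeta}. The only (minor) difference is in the pushout step, which you verify by hand via the universal property, whereas the paper dispatches it in one line from the fact that $\Qndt$ is closed under quotients in $\Qnd$; your direct argument is in effect an unwinding of that same observation.
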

\begin{proof}
This follows immediately from Lemma \ref{specialPushouts}, from Remark \ref{permutabilityofeta} and the fact that the square \eqref{canonical} is a pushout. This latter property follows immediately from the fact that, given a surjective homomorphism in $\Qnd$ whose domain is a trivial quandle, its codomain is also trivial (in other words $\Qndt$ is closed in $\Qnd$ under quotients).
\end{proof}
The following property will also be needed:
\begin{corollary}\label{inducedissurjective}
Given a surjective quandle homomorphism $f \colon A \rightarrow B$, the induced homomorphism $\overline{f} \colon \sim_{ \mathsf{Inn}(A)} \rightarrow  \sim_{\mathsf{Inn}(B)}$ making the diagram
\begin{equation}\vcenter{\label{induced}
\xymatrix{\sim_{\mathsf{Inn}(A)} \ar@{.>}[r]^{\overline{f}}  \ar@<2pt>[d] \ar@<-2pt>[d]  & \sim_{\mathsf{Inn}(B)} \ar@<2pt>[d] \ar@<-2pt>[d]  \\
A \ar[r]_f & B. }}
\end{equation}
commute is surjective.
\end{corollary}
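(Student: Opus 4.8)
The plan is to recognise that $\overline f$ is, by its very construction, nothing but the restriction of the product map $f \times f \colon A \times A \to B \times B$ to the subquandle $\sim_{\Inn(A)}$, so that surjectivity of $\overline f$ amounts to a \emph{compatible lifting} statement: given any pair $(b,b')\in\ \sim_{\Inn(B)}$, I must produce a pair $(a,a')\in\ \sim_{\Inn(A)}$ with $f(a) = b$ and $f(a') = b'$. The subtlety is that I am not free to choose arbitrary preimages of $b$ and $b'$ separately, since two such preimages need not lie in a common connected component of $A$; they must be chosen so as to be $\sim_{\Inn(A)}$-related.

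First I would record that $\sim_{\Inn(A)}$ is exactly the kernel congruence $\Eq(\eta_A)$ of the unit component $\eta_A \colon A \to \pi_0(A)$: two elements are $\sim_{\Inn(A)}$-related precisely when they lie in the same orbit of $\Inn(A)$, i.e. in the same connected component, i.e. have the same image under $\eta_A$. The same holds for $B$, so a pair $(b,b')$ lies in $\sim_{\Inn(B)}$ if and only if $\eta_B(b) = \eta_B(b')$.

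Next, starting from such a pair $(b,b')$, I would use the surjectivity of $f$ to pick some $a \in A$ with $f(a) = b$, and then form the pair $(\eta_A(a), b') \in \pi_0(A) \times B$. The commutativity of the square \eqref{canonical} together with the equality $\eta_B(b) = \eta_B(b')$ yields $\pi_0(f)(\eta_A(a)) = \eta_B(f(a)) = \eta_B(b) = \eta_B(b')$, so this pair satisfies the defining compatibility condition and is a genuine element of the pullback $\pi_0(A) \times_{\pi_0(B)} B$.

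The crucial step, and the only place where anything nontrivial happens, is then to invoke Corollary \ref{QuandleAdj}: since the canonical arrow $(\eta_A, f) \colon A \to \pi_0(A) \times_{\pi_0(B)} B$ is surjective, the element $(\eta_A(a), b')$ of the pullback admits a preimage $a' \in A$, that is, $\eta_A(a') = \eta_A(a)$ and $f(a') = b'$. The first equality gives $(a,a')\in\ \sim_{\Inn(A)}$, and then $\overline f(a,a') = (f(a), f(a')) = (b,b')$, which establishes surjectivity. I expect the main obstacle to be precisely this compatible lifting of $b'$ inside the connected component of $a$; everything else is bookkeeping, and that obstacle is exactly what Corollary \ref{QuandleAdj}, and ultimately the permutability Lemma \ref{permut}, is designed to overcome.
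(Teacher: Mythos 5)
Your proof is correct and takes essentially the same route as the paper: both arguments rest entirely on Corollary \ref{QuandleAdj}, i.e.\ the surjectivity of the comparison map $\gamma = (\eta_A,f) \colon A \to \pi_0(A) \times_{\pi_0(B)} B$, which is exactly what you invoke to lift the pair $(b,b')$ compatibly. The only difference is presentational: the paper performs the same computation in the calculus of relations, establishing $\gamma(\sim_{\Inn(A)}) = \Eq(p_1)$ and then pushing forward along $p_2$, whereas you unwind that identity into an explicit element-chase.
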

\begin{proof}
The commutative diagram \eqref{induced} is obtained 
by taking the kernel congruences $\sim_{\mathsf{Inn}(A)}$ and $\sim_{\mathsf{Inn}(B)}$ of $\eta_A$ and $\eta_B$ in diagram \eqref{canonical}, respectively.
Let us write~$\gamma = (\eta_A, f)  \colon A \rightarrow \pi_0 (A) \times_{\pi_0 (B)} B $ for the induced homomorphism to the pullback $(\pi_0 (A) \times_{\pi_0 (B)} B,p_1,p_2)$ of $\pi_0(f)$ and $\eta_B$ such that $p_1 \circ \gamma = \eta_A$ and $p_2 \circ \gamma = f$.
To see that the induced homomorphism $\overline{f}$ is surjective, it will suffice then to check that the direct image $\gamma (\sim_{\mathsf{Inn}(A)})$ of $ \sim_{\mathsf{Inn}(A)}$ along $\gamma$ is $\Eq(p_1)$, since this will imply that $$f( \sim_{\mathsf{Inn}(A)} ) = (p_2 \circ \gamma) (\sim_{\mathsf{Inn}(A)}) = p_2 (\gamma (\sim_{\mathsf{Inn}(A)})) = p_2 (\Eq(p_1) )= \sim_{\mathsf{Inn}(B).}$$
The equality $\gamma (\sim_{\mathsf{Inn}(A)})= \Eq(p_1)$ follows from the fact that $\gamma \circ \gamma^o = \Delta_{\pi_0(A) {\times}_{\pi_0 (B)} B}$ (since $\gamma$ is a surjective homomorphism by Corollary \eqref{QuandleAdj}):
$$ \gamma (\sim_{\mathsf{Inn}(A)})= \gamma \circ {\eta_A}^o \circ \eta_A \circ \gamma^o=  \gamma \circ \gamma^o \circ p_1^o \circ p_1 \circ \gamma \circ \gamma^o = p_1^o \circ p_1 = \Eq(p_1).$$
\end{proof}

\section{The induced factorisation system for surjective morphisms}\label{factsurj}
In this section we show that there is a factorisation system of surjective homomorphisms induced by the reflective subcategory~$\Qndt$ of $\Qnd$, and we describe it explicitly. We refer the reader to the reference \cite{CJKP} for a discussion of general factorisation systems (see also \cite{Chi} for the notion of factorisation system for a given class of morphism).\\

Let $\mathcal F$ be the class of surjective homomorphisms in $\Qnd$. 
\begin{definition}\label{system}
A pair $(\mathcal{E},\mathcal{M})$ of classes of maps in $\Qnd$ constitutes a \emph{factorisation system} for $\mathcal F$ if
\begin{itemize}
\item[(i)]  $\mathcal{E}$ and $\mathcal M$ contain the identities and are closed under composition with isomorphisms;
\item[(ii)] every map in $\mathcal{F}$ can be written as $m \circ e$ with $m\in \mathcal{M}$ and $e \in \mathcal{E}$;
\item[(iii)] given any commutative square
$$
\xymatrix{A \ar[r]^e  \ar[d]_u & B \ar[d]^{v} \\ C  \ar[r]_m  & D 
}$$
with $e$ in $\mathcal E$ and $m$ in $\mathcal M$, there is a unique arrow $w \colon B \rightarrow C$ such that $w \circ e = u$ and $m \circ w = v$.
\end{itemize}
\end{definition}

The restriction of the notion of factorisation system to the class $\mathcal F$ of surjective homomorphisms is related to the fact that the functor $\pi_0$ has a nice exactness property only with respect to the class $\mathcal F$ of surjective homomorphisms in $\Qnd$. 
This fact is explained in the following result from \cite{E}, which is now based on Corollary \eqref{QuandleAdj}:
\begin{theorem}\label{Admissible}
In the adjunction \eqref{adj}  the reflector $\pi_0 \colon \Qnd \rightarrow \Qndt$ preserves all pullbacks in $\Qnd$ of the form
\begin{equation}\vcenter{
\xymatrix{B \times_{\pi_0(B)} X \ar[d]_{\pi_1} \ar[r]^-{\pi_2}  & X \ar[d]^{\phi} \\
B \ar[r]_{\eta_B} & \pi_0(B)
}\tag{3.1}\label{12}}
\end{equation}
where  $\phi \colon X \rightarrow \pi_0 (B)$ is a morphism of $\mathcal F$ lying in the subcategory $\Qnd^*$. 
\end{theorem}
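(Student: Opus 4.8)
The plan is to reduce the preservation statement to the bijectivity of the single morphism $\pi_0(\pi_2)$, and then to establish that bijectivity by a chain-lifting argument. First I would record two simplifications coming from the fact that $X$ and $\pi_0(B)$ are objects of $\Qndt$. Since $X$ is trivial we have $\pi_0(X)=X$ and $\eta_X=\mathrm{id}_X$, and since $\pi_0(B)$ is trivial the morphism $\pi_0(\eta_B)\colon \pi_0(B)\to \pi_0(\pi_0(B))=\pi_0(B)$ is an isomorphism, as is the case for the unit of any reflection evaluated after the reflector. Applying $\pi_0$ to the square \eqref{12} therefore produces a square whose lower edge is the isomorphism $\pi_0(\eta_B)$; hence the pullback of its images in $\Qndt$ is carried isomorphically onto $X$ by the projection to $X$, and the canonical comparison morphism $\pi_0(B\times_{\pi_0(B)}X)\to \pi_0(B)\times_{\pi_0(B)}X$ is an isomorphism precisely when $\pi_0(\pi_2)\colon \pi_0(B\times_{\pi_0(B)}X)\to X$ is. So it suffices to prove that $\pi_0(\pi_2)$ is a bijection.

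Next I would isolate the structural feature that makes the problem tractable. Writing $P=B\times_{\pi_0(B)}X$, the triviality of $X$ forces every inner automorphism of $P$ to fix the second coordinate, since $(b,x)\lhd (b',x')=(b\lhd b',\,x)$ and likewise for $\lhd^{-1}$. Consequently the orbit of $(b,x)$ under $\Inn(P)$ has constant $X$-coordinate, which yields at once the inclusion $\sim_{\Inn(P)}\,\subseteq\,\Eq(\pi_2)$ and shows that $\pi_0(\pi_2)$ is well defined on connected components. Surjectivity of $\pi_0(\pi_2)$ is then immediate: as $\phi\in\mathcal{F}$ and $\eta_B$ are surjective, the projection $\pi_2$ is surjective, so every $x\in X$ is already of the form $\pi_2(b,x)$.

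The heart of the argument is the reverse inclusion $\Eq(\pi_2)\subseteq\,\sim_{\Inn(P)}$, which is exactly the injectivity of $\pi_0(\pi_2)$. Given $(b,x)$ and $(b',x)$ in $P$ with the same second coordinate, the defining equations of the pullback give $\eta_B(b)=\phi(x)=\eta_B(b')$, so $b$ and $b'$ lie in the same connected component of $B$ and are linked by a chain $b\lhd^{\alpha_1}b_1\lhd^{\alpha_2}\cdots\lhd^{\alpha_n}b_n=b'$ in $B$. I would lift this chain to $P$: using that $\phi$ is surjective, choose for each $i$ an element $x_i\in X$ with $\phi(x_i)=\eta_B(b_i)$, so that $(b_i,x_i)\in P$; the chain $(b,x)\lhd^{\alpha_1}(b_1,x_1)\lhd^{\alpha_2}\cdots\lhd^{\alpha_n}(b_n,x_n)$ then evaluates to $(b',x)$, its second coordinate remaining $x$ throughout by triviality of $X$. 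Hence $(b,x)$ and $(b',x)$ are $\Inn(P)$-equivalent, $\pi_0(\pi_2)$ is injective, and the comparison morphism is an isomorphism as required.

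I expect the only real obstacle to be this last lifting step, namely connecting the two points of a fibre of $\pi_2$ while keeping the $X$-coordinate fixed. It succeeds precisely because of the interplay of two hypotheses, the triviality of $X$ (so that the second coordinate is inert under $\lhd$) and the surjectivity of $\phi$ (so that each intermediate component of $B$ can be realised inside $P$). This is the same mechanism that underlies Corollary \ref{QuandleAdj} and Corollary \ref{inducedissurjective}, so that one may alternatively package the surjectivity and well-definedness above through those statements, with Remark \ref{permutabilityofeta} accounting for the good behaviour of $\eta_B$.
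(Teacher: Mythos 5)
Your proof is correct, but it follows a genuinely different route from the paper's. The paper reduces the statement to the same final point as you do (that $\pi_0(\pi_2)$, called $\psi$ there, is an isomorphism), but gets there categorically: it invokes Corollary \ref{QuandleAdj} --- itself resting on the permutability Lemma \ref{permut} and the pushout Lemma \ref{specialPushouts} --- to see that the comparison $\gamma \colon B \times_{\pi_0(B)} X \to B \times_{\pi_0(B)} \pi_0(B \times_{\pi_0(B)} X)$ is surjective, observes it is injective since the pullback projections are jointly monomorphic, and then concludes by a pullback-cancellation argument (Proposition $2.7$ of the Janelidze--Kelly paper, using that $\eta_B$ is surjective) applied to a pasting of two squares. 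You instead argue entirely at the level of elements: after the (correct) reduction to bijectivity of $\pi_0(\pi_2)$, you prove the key inclusion $\Eq(\pi_2) \subseteq\ \sim_{\Inn(B \times_{\pi_0(B)} X)}$ by lifting a connecting chain from $B$ to the pullback, using triviality of $X$ to keep the second coordinate inert and surjectivity of $\phi$ to realise each chain element inside the pullback. Each approach has its merits: the paper's proof exhibits the theorem as an application of the permutability machinery of its Section 1 (which the authors stress is of independent interest) and stays within the relational calculus used throughout, whereas your argument is self-contained, needs neither Corollary \ref{QuandleAdj} nor the cited pullback-cancellation lemma, and makes completely transparent where each hypothesis enters --- in particular, the indispensable role of the surjectivity of $\phi$ in the lifting step, which is exactly the point of the remark following Theorem \ref{Admissible} that $\pi_0$ is not semi-left-exact. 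The price is that your chain-lifting is ad hoc to this particular pullback, while the paper's intermediate results (Lemmas \ref{permut}, \ref{specialPushouts}, Corollaries \ref{QuandleAdj}, \ref{inducedissurjective}) are reused elsewhere, e.g.\ in the description of the classes $\mathcal{E}$ and $\mathcal{M}$ in Section \ref{factsurj}.
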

\begin{proof}
Consider the following commutative diagram where:
\begin{itemize}
\item the exterior rectangle is the pullback (\ref{12}), where $\phi \colon X \rightarrow \pi_0 (B)$ is a surjective homomorphism in the subcategory $\Qnd^*$;
\item the universal property of~the unit $\eta_{B \times_{\pi_0(B)} X} \colon B \times_{\pi_0(B)} X \to \pi_0(B \times_{\pi_0(B)} X)$ induces a unique arrow~$\psi \colon \pi_0\left(B \times_{\pi_0(B)}X\right) \to X$ with $\psi \circ \eta_{B \times_{\pi_0(B)} X} = p_2$;
\item the arrow $\gamma \colon B \times_{\pi_0(B)} X \to B \times_{\pi_0(B)} \pi_0(B \times_{\pi_0(B)} X)$ is the one induced by the universal property of the pullback of $\eta_B$ along $\pi_0(p_1)$.
\end{itemize}
\[\xymatrix{B \times_{\pi_0(B)} X \ar[rrr]^{p_2} \ar[dd]_{p_1} \ar@{.>}[dr]^\gamma \ar[drr]^{\eta_{B \times_{\pi_0(B)} X}} & & & X \ar[dd]^{\phi}\\
 & B \times_{\pi_0(B)} \pi_0(B \times_{\pi_0(B)} X) \ar[r]_-{\pi_2} \ar[dl]_{\pi_1} & \pi_0(B \times_{\pi_0(B)} X) \ar[dr]^{\pi_0(p_1)} \ar@{.>}[ru]^{\psi} & \\
B \ar[rrr]_{\eta_B} & & & \pi_0(B)}\]
By Corollary \eqref{QuandleAdj}, we know that the homomorphism $\gamma$ is surjective. The fact that~$\pi_1 \circ \gamma = p_1$ and $\psi \circ \pi_2 \circ \gamma = p_2$ implies that $\gamma$ is also injective. Indeed, this latter property follows from the fact that the pullback projections $p_1$ and $p_2$ are jointly monomorphic, i.e. if $p_i \circ u = p_i \circ v$ (for $i \in \{1,2\}$) then $u = v$. Accordingly, the arrow $\gamma$ is bijective, thus it is an isomorphism.
We can then consider the following diagram
\[\xymatrix@=45pt{\ar @{} [dr] |{(1)}
 B \times_{\pi_0(B)} X \ar[r]^-{\eta_{B \times_{\pi_0(B)} X}} \ar[d]_{p_1} & \ar @{} [dr] |{(2)} \pi_0(B \times_{\pi_0(B)} X) \ar[d]_{\pi_0(p_1)} \ar[r]^-{\psi} & X \ar[d]^{\phi}\\
B \ar[r]_{\eta_B} &\pi_0( B) \ar@{=}[r]_{1_{\pi_0(B)}} & \pi_0(B)}\]
where both the outer rectangle $(1)+(2)$ and the square $(1)$ are pullbacks. Since~$\eta_B$ is a surjective homomorphism it follows that $(2)$ is a pullback (see Proposition~$2.7$ in~\cite{JK}, for instance). This shows that the pullback (\ref{12}) is preserved by the functor~$\pi_0$, as desired.
\end{proof}
\begin{remark}
In categorical Galois theory the pullback preservation property expressed in Theorem \ref{Admissible} is usually called \emph{admissibility} \cite{JK} of the adjunction with respect to the choice of $\mathcal F$ as class of morphisms.
\end{remark}
\begin{remark}
It is not possible to weaken the assumption on~$\phi \colon X \to \pi_0(B)$, which has to be required to be a \emph{surjective} homomorphism. Indeed, as explained in \cite{E}, the functor $\pi_0$ does not preserve pullbacks of the form~(\ref{12}) when $\phi \colon X \rightarrow \pi_0(B)$ is not required to be surjective. In other words the functor $\pi_0$ is not semi-left-exact~\cite{CHK}. \end{remark}
\begin{remark}
One might wonder if, in general, the functor $\pi_0$ preserves pullbacks of surjective homomorphisms along surjective homomorphisms. The answer is negative, as the following counter-example shows: $\pi_0$ does not even preserve kernel pairs of \emph{split} epimorphisms, in general. This shows that the category $\Qnd$ behaves very differently compared to a semi-abelian category (see~\cite{Gran}, Lemma $8.2$).

Let us consider the \emph{involutive} quandle $A$ (this means that $\lhd = \lhd^{-1}$) with four elements $\{ a, b, c, d \}$ defined by the following table
\[\begin{tabular}{|l|l|l|l|l|}
\hline
	$\lhd$ & a & b & c & d\\
\hline
    a & a $\lhd$ a = a & a $\lhd$ b = a & a $\lhd$ c = a & a $\lhd$ d = b\\
\hline
	b & b $\lhd$ a = b & b $\lhd$ b = b & b $\lhd$ c = b & b $\lhd$ d =a\\
\hline
	c & c $\lhd$ a = c & c $\lhd$ b = c & c $\lhd$ c = c & c $\lhd$ d = c\\
\hline
	d & d $\lhd$ a = d & d $\lhd$ b = d & d $\lhd$ c = d & d $\lhd$ d = d\\
\hline
\end{tabular}\]
and the trivial quandle $B$ with two elements $\{ x, y\}$. Let $f \colon A \to B$ be defined by~$f(a) =f(b) =f(c) = x$ and $f(d) = y$. This quandle homomorphism is surjective, and it is even split by the quandle homomorphism $s \colon B \to A$ defined by~$s(x) = c$ and~$s(y) = d$. Its kernel pair $\Eq(f)$ is not preserved by the functor~$\pi_0$. \linebreak Indeed, $[(a,b)]$ and $[(a,a)]$ are distinct elements in $\pi_0(\Eq(f))$ (since $(d,d)$ is the only member of $\Eq(f)$ acting non trivially on $\Eq(f)$), while the corresponding images $([a],[b])$ and $([a],[a])$ are equal in $\Eq(\pi_0(f))$. Accordingly, $\Eq(\pi_0(f))$ is not isomorphic to~$\pi_0(\Eq(f))$.
\end{remark}

Consider the pair $(\mathcal E,\mathcal M)$ of classes of arrows, where
$\mathcal{E}$ is given by the morphisms in $\mathcal F$ inverted by the functor $\pi_0$, and $\mathcal M$ is the class of morphisms $f\colon A \rightarrow B$ in $\mathcal F$ such that the natural square 

\begin{equation}
\vcenter{\xymatrix{A \ar[r]^f \ar[d]_{\eta_A}& B \ar[d]^{\eta_B} \\
\pi_0 (A) \ar[r]_{\pi_0 (f) }& \pi_0 (B)
}\tag{3.2}\label{32}}
\end{equation}
induced by the unit $\eta$ of the adjunction is a pullback. The arrows in the class~$\mathcal M$ are called \emph{trivial extensions} in categorical Galois theory \cite{JK}. The two classes $(\mathcal E,\mathcal M)$ of surjective homomorphisms form a factorisation system for the class $\mathcal F$ in $\Qnd$, as we shall show here below. The morphisms belonging to these two classes can be described as follows:
\begin{proposition}
A surjective homomorphism $f \colon A \rightarrow B$ belongs to the class $\mathcal E$\\  if and only if $\Eq(f) \subset \sim_{\mathsf{Inn}(A)}$. 
\end{proposition}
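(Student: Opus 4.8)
The plan is to reduce membership in $\mathcal E$ to an injectivity statement about $\pi_0(f)$ and then to translate that statement into the orbit language used to define $\pi_0$. Being a left adjoint, $\pi_0$ preserves epimorphisms, and surjective homomorphisms are epimorphisms; since the reflector identifies trivial quandles with plain sets, $\pi_0(f)$ is a surjective map whenever $f$ is. As a bijective homomorphism of trivial quandles is an isomorphism, $f$ lies in $\mathcal E$ (i.e. is inverted by $\pi_0$) if and only if $\pi_0(f)$ is injective. Recalling that $\sim_{\Inn(A)}$ is the kernel congruence $\Eq(\eta_A)$ and that $\pi_0(f)$ sends the component of $a$ to the component of $f(a)$, injectivity of $\pi_0(f)$ means precisely that $f(a) \sim_{\Inn(B)} f(a')$ implies $a \sim_{\Inn(A)} a'$. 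So the whole statement amounts to showing that this implication is equivalent to the inclusion $\Eq(f) \subset \ \sim_{\Inn(A)}$.

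One implication is immediate: if $\pi_0(f)$ is injective and $(a,a') \in \Eq(f)$, then $f(a) = f(a')$ forces $f(a) \sim_{\Inn(B)} f(a')$ trivially, whence $a \sim_{\Inn(A)} a'$, giving $\Eq(f) \subset \ \sim_{\Inn(A)}$.

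For the converse I would assume $\Eq(f) \subset \ \sim_{\Inn(A)}$ and take $a, a'$ with $f(a) \sim_{\Inn(B)} f(a')$. By definition of the orbit relation there is a connecting chain $f(a) \lhd^{\alpha_1} b_1 \lhd^{\alpha_2} \cdots \lhd^{\alpha_n} b_n = f(a')$ in $B$, with each $\lhd^{\alpha_i} \in \{ \lhd, \lhd^{-1}\}$. Using surjectivity of $f$ I would pick preimages $c_i \in A$ with $f(c_i) = b_i$ and set $a'' := a \lhd^{\alpha_1} c_1 \lhd^{\alpha_2} \cdots \lhd^{\alpha_n} c_n$. Since $a''$ is obtained from $a$ by applying inner automorphisms, one has $a'' \sim_{\Inn(A)} a$; and since $f$ is a homomorphism, $f(a'') = f(a) \lhd^{\alpha_1} b_1 \lhd^{\alpha_2} \cdots \lhd^{\alpha_n} b_n = f(a')$, so $(a'', a') \in \Eq(f) \subset \ \sim_{\Inn(A)}$. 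Transitivity of $\sim_{\Inn(A)}$ then yields $a \sim_{\Inn(A)} a'$, which is exactly the injectivity of $\pi_0(f)$.

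The only delicate point is the chain-lifting step in the converse: it is precisely here that surjectivity of $f$ is used, to pull the intermediate elements $b_i$ back into $A$, together with the fact that $f$ commutes with $\lhd$ and $\lhd^{-1}$. Everything else is a routine translation between the reflector $\pi_0$ and the orbit relation $\sim_{\Inn(-)}$, so I expect this lifting argument to be the heart of the proof.
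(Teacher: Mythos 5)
Your proof is correct, but it takes a genuinely different route from the paper's. You reduce membership in $\mathcal E$ to injectivity of the (automatically surjective) map $\pi_0(f)$, and you prove that injectivity by hand: a connecting chain $f(a) \lhd^{\alpha_1} b_1 \cdots \lhd^{\alpha_n} b_n = f(a')$ in $B$ is lifted through the surjection $f$ to produce $a'' \sim_{\Inn(A)} a$ with $f(a'')=f(a')$, and then the hypothesis $\Eq(f) \subset\ \sim_{\Inn(A)}$ together with transitivity closes the argument. This is elementary and entirely self-contained. The paper argues categorically instead: from $\Eq(f) \subset\ \sim_{\Inn(A)} = \Eq(\eta_A)$ it obtains a factorisation $\phi \colon B \to \pi_0(A)$ with $\phi \circ f = \eta_A$, then invokes Corollary \ref{inducedissurjective} --- the surjectivity of the induced morphism $\overline{f} \colon\ \sim_{\Inn(A)}\ \to\ \sim_{\Inn(B)}$, which itself rests on the permutability Lemma \ref{permut} via Corollary \ref{QuandleAdj} --- to conclude that $\phi$ coequalizes the two projections of $\sim_{\Inn(B)}$ and hence descends to a morphism $\psi \colon \pi_0(B) \to \pi_0(A)$ inverse to $\pi_0(f)$. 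It is worth noticing that your chain-lifting step is in substance an elementary proof of the key fact $f(\sim_{\Inn(A)}) =\ \sim_{\Inn(B)}$ underlying that corollary: given $(b,b') \in\ \sim_{\Inn(B)}$, lifting the chain from a preimage $a$ of $b$ yields $a'' \sim_{\Inn(A)} a$ with $f(a'')=b'$. So your approach buys independence from the relational calculus of Section \ref{locapermut}, making the proposition accessible with bare hands, while the paper's approach buys brevity given machinery it has already developed and needs elsewhere (e.g.\ in Proposition \ref{existence}), and illustrates how the permutability of congruences of the form $\sim_N$ is actually put to work.
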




\begin{proof}
%
%
The fact that $\pi_0$ inverts a surjective homomorphism $f \colon A \rightarrow B$ obviously implies that $\Eq(f) \subset \sim_{\mathsf{Inn}(A)}$.

Conversely, suppose now that $\Eq(f) \subset \sim_{\mathsf{Inn}(A)}$, so that we have the following commutative diagram
\[\xymatrix{ &\sim_{\mathsf{Inn}(A)} \ar@<2pt>[d] \ar@<-2pt>[d] \ar@{.>}[r]^-{\overline{f}} &\sim_{\mathsf{Inn}(B)} \ar@<2pt>[d]^{p_2} \ar@<-2pt>[d]_{p_1}\\
\Eq(f) \ar@{.>}[ur] \ar@<2pt>[r] \ar@<-2pt>[r]& A \ar[r]^f \ar[d]_{\eta_A} & B\ar[d]^{\eta_B}  \ar@{.>}[ld]^\phi \\
& \pi_0(A) \ar[r]_{\pi_0 (f)} & \pi_0(B) }\] 
where the induced dotted homomorphism $\overline{f}$ is a surjective \linebreak homomorphism~($f(\sim_{\mathsf{Inn}(A)}) =\sim_{\mathsf{Inn}(B)}$ by Corollary \eqref{inducedissurjective}),
and the induced dotted homomorphism~$\phi$ is such that $\phi \circ f = \eta_A$.
It follows that $\phi \circ p_1= \phi \circ p_2$, and there exists a unique morphism~$\psi \colon \pi_0(B) \rightarrow \pi_0(A)$ with $\psi \circ \eta_B = \phi$, which is the inverse of $\pi_0 (f)$. 
\end{proof}
\begin{remark}
For a surjective homomorphism $f \colon A \rightarrow B$ the condition~$\Eq(f) \subset  \sim_{\mathsf{Inn}(A)}$ says the following: if $f(a) = f(a')$, then there is an automorphism $\phi \in \mathsf{Inn}(A)$ such that $\phi(a) = a'$. In other words, $f$ can only identify elements of $A$ belonging to the same connected component.
\end{remark}
\begin{proposition}\label{classeM}
A surjective homomorphism $f \colon A \rightarrow B$ belongs to the class $\mathcal M$ \\  if and only if $\Eq(f) \cap \sim_{\mathsf{Inn}(A)} = \Delta_A$.
\end{proposition}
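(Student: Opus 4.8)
The plan is to translate the pullback condition defining $\mathcal M$ into a statement about the canonical comparison morphism to the pullback, and then to read off its injectivity as the stated intersection condition. By definition, $f \in \mathcal M$ precisely when the square \eqref{32} is a pullback, and this holds if and only if the canonical comparison $\gamma = (\eta_A, f) \colon A \to \pi_0(A) \times_{\pi_0(B)} B$ to the pullback of $\pi_0(f)$ and $\eta_B$ (satisfying $p_1 \circ \gamma = \eta_A$ and $p_2 \circ \gamma = f$) is an isomorphism. So the whole argument reduces to characterising when $\gamma$ is an isomorphism.

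The first step I would take is to invoke Corollary \ref{QuandleAdj}, which guarantees that $\gamma$ is \emph{always} surjective, independently of whether the square is a pullback. Since $\Qnd$ is a variety, any bijective quandle homomorphism is automatically an isomorphism; combined with the automatic surjectivity of $\gamma$, this means that $\gamma$ is an isomorphism if and only if it is injective. Hence $f \in \mathcal M$ if and only if $\Eq(\gamma) = \Delta_A$.

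The second step is to identify $\Eq(\gamma)$. Because the pullback projections $p_1, p_2$ are jointly monomorphic, one has $\gamma(a) = \gamma(a')$ if and only if both $\eta_A(a) = \eta_A(a')$ and $f(a) = f(a')$ hold; that is, $\Eq(\gamma) = \Eq(\eta_A) \cap \Eq(f)$. Since $\eta_A$ is exactly the projection of $A$ onto its set of connected components, we have $\Eq(\eta_A) = \sim_{\mathsf{Inn}(A)}$, and therefore $\Eq(\gamma) = \sim_{\mathsf{Inn}(A)} \cap \Eq(f)$. Putting the two steps together, $f \in \mathcal M$ if and only if $\sim_{\mathsf{Inn}(A)} \cap \Eq(f) = \Delta_A$, which is the assertion.

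I expect the only genuinely load-bearing point to be the first reduction: surjectivity of $\gamma$ is not obvious and is precisely what Corollary \ref{QuandleAdj} supplies, and it is this surjectivity that lets the pullback condition collapse to mere injectivity. Everything after that — recognising that a bijective homomorphism is an isomorphism, and computing the kernel congruence of a map into a pullback as the intersection of the kernel congruences of its two components — is routine.
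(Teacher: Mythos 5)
Your proof is correct and follows essentially the same route as the paper's: both reduce membership in $\mathcal M$ to the canonical comparison $(\eta_A,f)\colon A \to \pi_0(A)\times_{\pi_0(B)}B$ being an isomorphism, use Corollary \ref{QuandleAdj} for its automatic surjectivity, and then identify its kernel congruence with $\Eq(f)\cap \sim_{\mathsf{Inn}(A)}$. You have merely spelled out the routine steps (joint monicity of the projections, bijective homomorphisms in a variety being isomorphisms) that the paper leaves implicit.
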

\begin{proof}
This follows immediately from Corollary \eqref{QuandleAdj}. Indeed, given the commutative square \eqref{32}, we know that the factorisation $(\eta_A,f) \colon A \rightarrow \pi_0 (A) \times_{\pi_0 (B) }  B$ is a surjective homomorphism, which will be also injective if and only if its kernel congruence $\Eq(f) \cap \sim_{\mathsf{Inn}(A)} $ is the identity relation $\Delta_A$ on $A$.
\end{proof}
\begin{remark}
A surjective homomorphism $f \colon A \rightarrow B$ belongs to $\mathcal M$ when the following implication holds: $$\forall a, a' \in A, \, ( f(a)=f(a') ) \wedge (\exists \phi \in \mathsf{Inn}(A)\,  \mathrm{with } \, \phi (a)=a') \Rightarrow a=a'.$$
\end{remark}
We now show that any $f \colon A \to B$ in $\mathcal F$ has an $(\mathcal E$, $\mathcal M$) factorisation:

\begin{proposition}\label{existence}
Let $f \colon A \to B$ be a surjective homomorphism in $\Qnd$, then it has a factorisation $\tilde{f} \circ p$, where~$p$ belongs to $\mathcal{E}$ and $\tilde{f}$ belongs to $\mathcal M$.
\end{proposition}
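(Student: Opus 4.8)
The plan is to realise the factorisation through the canonical comparison to the pullback already appearing in the naturality square \eqref{32}. Write $P = \pi_0(A) \times_{\pi_0(B)} B$ for the pullback of $\pi_0(f)$ and $\eta_B$, with projections $p_1 \colon P \to \pi_0(A)$ and $p_2 \colon P \to B$, and let $p = (\eta_A, f) \colon A \to P$ be the induced comparison, so that $p_1 \circ p = \eta_A$ and $p_2 \circ p = f$. Setting $\tilde f = p_2$, one has $f = \tilde f \circ p$ tautologically, and the whole task reduces to verifying that $p \in \mathcal E$ and that $\tilde f \in \mathcal M$.

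The membership $p \in \mathcal E$ is the quick half. By Corollary \ref{QuandleAdj} the comparison $p = (\eta_A, f)$ is a surjective homomorphism, hence lies in $\mathcal F$; and since its two components are $\eta_A$ and $f$, its kernel congruence is $\Eq(p) = \Eq(\eta_A) \cap \Eq(f) = \sim_{\Inn(A)} \cap \Eq(f)$, which is plainly contained in $\sim_{\Inn(A)}$. By the characterisation of $\mathcal E$ proved above, this already yields $p \in \mathcal E$.

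The substance of the argument is to check $\tilde f = p_2 \in \mathcal M$, and here the decisive tool is the admissibility Theorem \ref{Admissible}. The pullback defining $P$ is exactly of the shape \eqref{12}: its right-hand leg is $\pi_0(f) \colon \pi_0(A) \to \pi_0(B)$, a surjective homomorphism lying in the subcategory $\Qndt$ because $\pi_0(A)$ is a trivial quandle and $f$ is surjective. Theorem \ref{Admissible} therefore tells us that $\pi_0$ preserves this pullback. Applying $\pi_0$ and invoking the triangle identities---so that $\pi_0(\eta_B)$ is an isomorphism and $\pi_0$ is, up to the canonical comparison, the identity on the trivial quandles $\pi_0(A)$ and $\pi_0(B)$---we find that $\pi_0(p_1) \colon \pi_0(P) \to \pi_0(A)$ is the pullback of an isomorphism, hence itself an isomorphism.

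It then remains to invoke Proposition \ref{classeM}. As $f = \tilde f \circ p$ is surjective, so is $\tilde f = p_2$, and it suffices to prove $\Eq(p_2) \cap \sim_{\Inn(P)} = \Delta_P$. A pair in this intersection consists of two points $([a_1],b)$ and $([a_2],b)$ of $P$ with the same second coordinate $b$ and lying in a single connected component of $P$; applying the isomorphism $\pi_0(p_1)$ to their common class in $\pi_0(P)$ forces $[a_1] = [a_2]$ in $\pi_0(A)$, so the two points coincide. Hence $\Eq(p_2) \cap \sim_{\Inn(P)} = \Delta_P$ and $\tilde f \in \mathcal M$ by Proposition \ref{classeM}. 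I expect the computation of $\pi_0(P)$ through Theorem \ref{Admissible} to be the only genuinely delicate step; the verifications of the kernel-congruence conditions are then routine.
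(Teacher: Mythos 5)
Your proof is correct, but it follows a genuinely different route from the paper's. The paper builds the middle object as the quotient $A/(\Eq(f) \cap \sim_{\Inn(A)})$, takes $p$ to be the canonical projection, and verifies the $\mathcal M$-condition of Proposition \ref{classeM} for the induced map $\tilde f([a])=f(a)$ by computing direct images of congruences, using Corollary \ref{inducedissurjective} to identify $p(\sim_{\Inn(A)})$ with $\sim_{\Inn(A/(\Eq(f)\cap \sim_{\Inn(A)}))}$. You instead take the middle object to be the pullback $P=\pi_0(A)\times_{\pi_0(B)}B$ with $p=(\eta_A,f)$; the two factorisations agree up to isomorphism, precisely because your $p$ is surjective (Corollary \ref{QuandleAdj}) with $\Eq(p)=\Eq(f)\cap \sim_{\Inn(A)}$, so the paper's quotient maps isomorphically onto $P$. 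What your presentation buys: the $\mathcal M$-membership of $\tilde f=p_2$ is nearly structural, since $\mathcal M$ is by definition the class of trivial extensions, i.e. pullbacks along units, and you avoid the somewhat delicate direct-image computation inside the quotient. What the paper's presentation buys: the whole argument stays inside $A$ and its congruences, with $\Eq(p)$ visible from the outset. One further remark: your detour through Theorem \ref{Admissible} is unnecessary, and it is not in fact the delicate heart of your proof. Since $\pi_0(A)$ is a trivial quandle, any homomorphism into it identifies connected elements, so $\sim_{\Inn(P)} \subseteq \Eq(p_1)$; hence
$$\Eq(p_2)\cap \sim_{\Inn(P)} \ \subseteq\ \Eq(p_1)\cap\Eq(p_2)\ =\ \Delta_P,$$
because the pullback projections are jointly injective. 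Indeed, your final step only ever uses naturality of $\eta$ and the injectivity of $\eta_{\pi_0(A)}$, never the isomorphism property of $\pi_0(p_1)$ that you extracted from admissibility.
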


\begin{proof}
Consider the following commutative diagram
\[\xymatrix{\Eq(f) \cap  \sim_{\mathsf{Inn}(A)}  \ar[r] \ar[d] & \sim_{\mathsf{Inn}(A)} \ar@<2pt>[d] \ar@<-2pt>[d] & & \\
\Eq(f) \ar@<2pt>[r] \ar@<-2pt>[r] \ar@{.>}[d] & A \ar[rr]^f \ar[rd]_p & & B\\
\Eq(\tilde{f}) \ar@<2pt>[rr] \ar@<-2pt>[rr] & & \frac{A}{ \Eq(f) \cap \sim_{\mathsf{Inn}(A)}} \ar@{.>}[ru]_{\tilde{f}} &} \]
where $p$ is the canonical quotient, and $\tilde{f}$ is the unique homomorphism such \linebreak that~$\tilde{f} \circ~p = f$: this homomorphism $\tilde{f}$ is defined by $\tilde{f} ([a]) = f(a)$ for \linebreak any $[a] \in~\frac{A}{ \Eq(f) \cap \sim_{\mathsf{Inn}(A)}}$ .
Note that by construction $\Eq(p) \subset \sim_{\mathsf{Inn}(A)}$, so that $p$ is a surjective homomorphism inverted by $\pi_0$. Furthermore, one has the equalities $$\Delta_{\frac{A}{ \Eq(f) \cap \sim_{\mathsf{Inn}(A)}} }= p(\Eq(f) \cap \sim_{\mathsf{Inn}(A)}) = \Eq(\tilde{f}) \cap {\sim}_{\mathsf{Inn} ( \frac{A}{ \Eq(f) \cap \sim_{\mathsf{Inn}(A)}})},$$ 
where $p(\sim_{\mathsf{Inn}(A)} ) =  {\sim}_{\mathsf{Inn} ( \frac{A}{ \Eq(f) \cap \sim_{\mathsf{Inn}(A)}})}$ thanks to Corollary \ref{inducedissurjective}. 
 By Proposition \ref{classeM} it follows that $\tilde{f}$ belongs to $\mathcal M$.
\end{proof}

The classes  \[\mathcal{E} = \{  f \colon A \to B \, \vert \, f \in \mathcal F \,  \mathrm{and } \,  \Eq(f) \subset  \sim_{\mathsf{Inn}(A)} \}  \] 
and 
\[\mathcal{M} = \{f \colon A \rightarrow B \, \vert  \,  f \in \mathcal F \,  \mathrm{and } \, \Eq(f) \cap \sim_{\mathsf{Inn}(A)}  = \Delta_A\}\]
  form a factorisation system for $\mathcal F$ in the category $\Qnd$ of quandles:

\begin{proposition}
 $(\mathcal E,\mathcal M)$ is a factorisation system for $\mathcal F$ in $\Qnd$.
\end{proposition}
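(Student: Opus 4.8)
The plan is to verify the three axioms of Definition~\ref{system} for the pair $(\mathcal E,\mathcal M)$, relying on the explicit descriptions of the two classes together with the results established above. Axiom (i) is essentially bookkeeping: isomorphisms are surjective, and for an isomorphism $f$ one has $\Eq(f)=\Delta_A$, which is trivially contained in $\sim_{\mathsf{Inn}(A)}$ and trivially meets it in $\Delta_A$, so identities and isomorphisms lie in both $\mathcal E$ and $\mathcal M$; closure under composition with isomorphisms follows because composing with an isomorphism does not change the kernel congruence (up to the obvious transport). Axiom (ii), the existence of factorisations, has already been proved in Proposition~\ref{existence}, so I would simply cite it.

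The substance of the argument is axiom (iii), the unique diagonal fill-in. First I would observe that uniqueness is automatic: in a commutative square with $e$ surjective, any two fillers $w,w'$ satisfy $w\circ e=u=w'\circ e$, and since $e$ is an epimorphism (being surjective) this forces $w=w'$. So the real work is existence of the diagonal $w$. Given the square with $e\colon A\to B$ in $\mathcal E$ and $m\colon C\to D$ in $\mathcal M$, I want to produce $w\colon B\to C$ with $w\circ e=u$ and $m\circ w=v$. The natural strategy is to define $w$ elementwise: for $b\in B$ choose $a\in A$ with $e(a)=b$ (possible since $e$ is surjective) and set $w(b):=u(a)$. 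Well-definedness is exactly the point where the hypotheses enter, and this is the main obstacle.

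To see that $w$ is well-defined, suppose $e(a)=e(a')$, i.e.\ $(a,a')\in\Eq(e)$. Since $e\in\mathcal E$ we have $\Eq(e)\subset\sim_{\mathsf{Inn}(A)}$, so $(a,a')\in\sim_{\mathsf{Inn}(A)}$ as well; thus there is some $\phi\in\mathsf{Inn}(A)$ with $\phi(a)=a'$, meaning $a$ and $a'$ lie in the same connected component. I must deduce $u(a)=u(a')$. Now commutativity of the square gives $m(u(a))=v(e(a))=v(e(a'))=m(u(a'))$, so $(u(a),u(a'))\in\Eq(m)$. On the other hand, from $(a,a')\in\sim_{\mathsf{Inn}(A)}$ I claim $(u(a),u(a'))\in\sim_{\mathsf{Inn}(C)}$: indeed $u$ is a quandle homomorphism, hence maps each connected component of $A$ into a single component of $C$ (concretely, if $a'=a\lhd^{\alpha_1}a_1\lhd^{\alpha_2}\cdots\lhd^{\alpha_n}a_n$ then applying $u$ yields $u(a')=u(a)\lhd^{\alpha_1}u(a_1)\lhd^{\alpha_2}\cdots\lhd^{\alpha_n}u(a_n)$, so $u(a)$ and $u(a')$ are linked). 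Therefore $(u(a),u(a'))\in\Eq(m)\cap\sim_{\mathsf{Inn}(C)}$. But $m\in\mathcal M$ means precisely $\Eq(m)\cap\sim_{\mathsf{Inn}(C)}=\Delta_C$, forcing $u(a)=u(a')$. This establishes that $w$ is well-defined.

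It then remains to check the routine points: that $w$ is a quandle homomorphism (immediate from $w(e(a)\lhd e(a'))=w(e(a\lhd a'))=u(a\lhd a')=u(a)\lhd u(a')=w(e(a))\lhd w(e(a'))$, and similarly for $\lhd^{-1}$, using surjectivity of $e$ to reach every pair of elements of $B$), that $w\circ e=u$ by construction, and that $m\circ w=v$ (again verified on elements: $m(w(e(a)))=m(u(a))=v(e(a))$, and surjectivity of $e$ promotes this to $m\circ w=v$). With axioms (i), (ii), (iii) all in place, $(\mathcal E,\mathcal M)$ is a factorisation system for $\mathcal F$. I expect the diagonal fill-in, and specifically the well-definedness argument above, to be the crux, since it is exactly there that the complementary conditions $\Eq(e)\subset\sim_{\mathsf{Inn}(A)}$ and $\Eq(m)\cap\sim_{\mathsf{Inn}(C)}=\Delta_C$ interact through the homomorphism $u$.
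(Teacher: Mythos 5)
Your proof is correct, but for the crucial axiom (iii) it takes a genuinely different route from the paper. The paper works categorically with the \emph{defining} descriptions of the two classes: it forms the cube obtained by applying $\pi_0$ to the square, uses that the bottom face of $m$'s naturality square is a pullback (the trivial-extension property defining $\mathcal M$) and that $\pi_0(f)$ is an isomorphism (the defining property of $\mathcal E$), and then obtains the diagonal $t\colon B \to C$ from the universal property of that pullback, with $t \circ f = g$ following because $\eta_C$ and $m$ are jointly monomorphic. You instead work entirely with the element-wise characterisations $\Eq(e) \subset\ \sim_{\mathsf{Inn}(A)}$ and $\Eq(m)\ \cap \sim_{\mathsf{Inn}(C)} = \Delta_C$ (the paper's two preceding propositions), construct the filler by hand via preimages, and locate the heart of the matter in well-definedness: the key observation that $u$, being a quandle homomorphism, carries connected components into connected components (so $\sim_{\mathsf{Inn}(A)}$ pushes forward into $\sim_{\mathsf{Inn}(C)}$), after which the two conditions clash inside $\Eq(m)\ \cap \sim_{\mathsf{Inn}(C)} = \Delta_C$ to force equality. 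Your uniqueness argument (surjections are epimorphisms) is also more elementary than the paper's joint-monomorphicity argument. What your approach buys is a self-contained, concrete verification that makes transparent exactly how the two complementary conditions interact; what the paper's approach buys is a proof that never descends to elements and thus exhibits the factorisation system as an instance of the general Galois-theoretic pattern (admissibility of the reflection, trivial extensions), which is the conceptual point of the article. Both proofs ultimately rest on the same earlier results, since the element-wise characterisations you invoke are themselves consequences of Corollary \ref{QuandleAdj}.
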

\begin{proof}
The condition (i) in Definition \ref{system} is easily checked, while condition (ii) is guaranteed by Proposition \ref{existence}.
To check the condition (iii) in the definition of a factorisation system for $\mathcal F$ consider any
commutative diagram
\begin{equation}
\vcenter{\xymatrix{A \ar[r]^f \ar[d]_g & B \ar[d]^h \\
C \ar[r]_m & D}\tag{3.3}\label{33}}
\end{equation}
in $\Qnd$  where $f\in \mathcal{E}$ and $m \in \mathcal{M}$. We have to show the existence of a unique morphism $t \colon B \rightarrow C$ such that $t \circ f =g$ and $m\circ t = h$.
Consider the cube
\[\xymatrix{A\ar[rr]^f \ar[dr]^{\eta_A} \ar[dd]_g&&B \ar[rd]^{\eta_B} \ar@{-}[d]^<<<<<<<h&\\
&\pi_0(A) \ar[dd]^>>>>>>>{\pi_0(g)} \ar@<2pt>[rr]^<<<<<<{\varphi} \ar@{<-}@<-2pt>[rr]_>>>>>>{\varphi^{-1}} &\ar[d]&\pi_0(B) \ar[dd]^{\pi_0(h)} \\
C \ar@{-}[r]_<<<<<<<<<m \ar[dr]_{\eta_C} &\ar[r]&D \ar[dr]^{\eta_D}&\\
&\pi_0(C) \ar[rr]_{\pi_0(m)}&&\pi_0(D)}\]
where the bottom face is a pullback since $m$ belongs to $\mathcal M$, and~$\pi_0 (f)=~\varphi$ is an isomorphism since $f \in \mathcal E$. The universal property of the pullback and the equality $$\pi_0(m) \circ \pi_0(g) \circ \varphi^{-1} \circ \eta_B = \pi_0(h) \circ \eta_B = \eta_D \circ h$$ induce a unique morphism $t \colon B \to C$ such that, in particular, $m \circ t =h$. The equality $t \circ f = g$ follows from the fact that the morphisms $\eta_C$ and $m$ are jointly monomorphic.
\end{proof}


\section{Comparison with another factorisation system}

Finally, let us compare this factorisation system with another one in $\Qnd_{\rm{RegEpi}}$. In~\cite{BLRY}, E. Bunch, P. Lofgren, A. Rapp and D. N. Yetter showed that every  surjective homomorphism in $\Qnd$ has a canonical factorisation whose second component is what the authors of that article call a \emph{rigid quotient}, namely a surjective homomorphism $h$ such that $\Inn(h)$ is an isomorphism.

\begin{proposition}\label{factY}
Let $f \colon A \rightarrow B$ be a surjective homomorphism in $\Qnd$. Then~$f$ has a factorisation as $f = h \circ g$, where {$g \colon A \rightarrow A/\sim_{\Ker(\Inn(f))}$} \linebreak and $h \colon A/\sim_{\Ker(\Inn(f))} \rightarrow B$ is such that $\Inn(h)$ is an isomorphism.
\end{proposition}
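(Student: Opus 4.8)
The plan is to take $N = \Ker(\Inn(f))$, which is a normal subgroup of $\Inn(A)$ since it is the kernel of the induced group homomorphism $\Inn(f)\colon \Inn(A) \to \Inn(B)$; by Theorem $6.1$ of \cite{BLRY} the relation $\sim_N$ is then a congruence on $A$, so I may let $g \colon A \to A/\!\sim_N$ be the canonical quotient, with $\Eq(g) = \sim_N$. To produce the factorisation I first check that $\sim_N \subseteq \Eq(f)$. The key computational tool is the equivariance identity $f \circ \psi = \Inn(f)(\psi) \circ f$, valid for every $\psi \in \Inn(A)$: it holds on the generators because $f$ is a quandle homomorphism, $f \circ \rho_b = \rho_{f(b)} \circ f$, and extends to all of $\Inn(A)$. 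If $a \sim_N a'$, then $a' = \phi(a)$ for some $\phi \in N = \Ker(\Inn(f))$, whence $f(a') = f(\phi(a)) = \Inn(f)(\phi)(f(a)) = f(a)$. Thus $\sim_N \subseteq \Eq(f)$, and the universal property of the quotient yields a unique homomorphism $h \colon A/\!\sim_N \to B$ with $h \circ g = f$; since $f$ is surjective, so is $h$.

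It remains to prove that $\Inn(h)$ is an isomorphism. Here I use that $\Inn$ is functorial on surjective homomorphisms, sending $f$ to the surjective group homomorphism $\rho_a \mapsto \rho_{f(a)}$; applied to $f = h \circ g$ this gives $\Inn(f) = \Inn(h) \circ \Inn(g)$, with $\Inn(g)$ and $\Inn(h)$ both surjective (as $g$ and $h$ are). Surjectivity of $\Inn(h)$ is therefore immediate, and the whole difficulty concentrates in injectivity, which I reduce to the single equality $\Ker(\Inn(g)) = N$. One inclusion is formal: if $\Inn(g)(\chi) = \mathrm{id}$, then $\Inn(f)(\chi) = \Inn(h)(\Inn(g)(\chi)) = \mathrm{id}$, so $\Ker(\Inn(g)) \subseteq \Ker(\Inn(f)) = N$.

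The reverse inclusion $N \subseteq \Ker(\Inn(g))$ is the substantive point. Given $\chi \in N$, the same equivariance identity, now for $g$, gives for every $a \in A$
$$\Inn(g)(\chi)(g(a)) = g(\chi(a)) = g(a),$$
the last equality because $\chi \in N$ forces $\chi(a) \sim_N a$; since $g$ is surjective this shows $\Inn(g)(\chi) = \mathrm{id}$, i.e.\ $\chi \in \Ker(\Inn(g))$. Hence $\Ker(\Inn(g)) = N$. To conclude, take $\psi \in \Ker(\Inn(h))$; by surjectivity of $\Inn(g)$ write $\psi = \Inn(g)(\chi)$, so that $\Inn(f)(\chi) = \Inn(h)(\psi) = \mathrm{id}$ gives $\chi \in \Ker(\Inn(f)) = N = \Ker(\Inn(g))$, and therefore $\psi = \Inn(g)(\chi) = \mathrm{id}$. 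Thus $\Inn(h)$ is injective as well, hence an isomorphism. I expect the one genuinely non-formal step to be the orbit computation yielding $N \subseteq \Ker(\Inn(g))$, the remaining manipulations being diagram chases in the functor $\Inn$.
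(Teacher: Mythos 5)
Your proof is correct, but there is an important structural point: the paper itself contains \emph{no} proof of this proposition. It is imported as Theorem $8.1$ of \cite{BLRY}, and the only place it is ``proved'' is the later remark that condition (ii) of the factorisation system $(\mathcal{E}_1,\mathcal{M}_1)$ ``is precisely Theorem $8.1$ in \cite{BLRY}''. So there is no argument in the paper to compare yours against; what you have done is reconstruct the proof the paper delegates to the literature. Your reconstruction is sound and essentially the natural one. The normality of $N = \Ker(\Inn(f))$ in $\Inn(A)$, together with Theorem $6.1$ of \cite{BLRY} (the single external ingredient you use, and one the paper itself quotes in Section 1), makes $\sim_N$ a congruence, so the quotient $g$ exists with $\Eq(g) = \sim_N$. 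The equivariance identity $f \circ \psi = \Inn(f)(\psi) \circ f$ holds on generators because $f \circ \rho_b = \rho_{f(b)} \circ f$ and propagates along words and inverses (this same identity is what makes $\Inn(-)$ well defined on surjections, a point you correctly flag by restricting functoriality to surjective homomorphisms); it yields $\sim_N \subseteq \Eq(f)$, hence the factorisation $f = h \circ g$ with $h$ surjective and $\Inn(h)$ surjective. Finally, your reduction of injectivity to the equality $\Ker(\Inn(g)) = N$ is exactly the right move: for a composite of surjective group homomorphisms $\Inn(f) = \Inn(h) \circ \Inn(g)$, the equality $\Ker(\Inn(g)) = \Ker(\Inn(f))$ forces $\Ker(\Inn(h))$ to be trivial, and your orbit computation (for $\chi \in N$ and any $a$, $\chi(a) \sim_N a$, so $\Inn(g)(\chi)$ fixes every $g(a)$, hence is the identity by surjectivity of $g$) establishes the nontrivial inclusion $N \subseteq \Ker(\Inn(g))$. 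No gaps; the proposal could serve as a self-contained substitute for the citation.
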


Thanks to the result in \cite{BLRY}, we now show that the classes $$\mathcal{E}_1 = \{f \colon A \rightarrow B \, \vert \, f \in \mathcal F \,  \mathrm{and } \, \, \Eq(f) = \sim_{\Ker(\Inn(f))}\}$$ and $$\mathcal{M}_1 = \{f \colon A \to B \, \vert \, \, f \in \mathcal F \,  \mathrm{and } \,  \Inn(f) \text{ is an isomorphism} \}$$ form a factorisation system for the class $\mathcal F$ of surjective homomorphisms:
\begin{proposition}
$(\mathcal{E}_1, \mathcal{M}_1)$ is a factorisation system for $\mathcal F$ in $\Qnd$.
\end{proposition}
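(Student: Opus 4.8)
The plan is to check the three conditions of Definition \ref{system} one at a time, the first two being routine and the orthogonality condition (iii) carrying the real content. For condition (i), I would observe that $\Eq(1_A)=\Delta_A=\sim_{\{1\}}=\sim_{\Ker(\Inn(1_A))}$ and that $\Inn(1_A)$ is an isomorphism, so that $1_A$ lies in both $\mathcal{E}_1$ and $\mathcal{M}_1$; closure under composition with isomorphisms is immediate from the functoriality of $\Inn$ on surjective homomorphisms, together with the fact that composing with a bijective homomorphism changes neither $\Eq$ nor $\Ker(\Inn(-))$ in an essential way. Condition (ii) is supplied by Proposition \ref{factY}: writing $f=h\circ g$ with $g\colon A\to A/{\sim_{\Ker(\Inn(f))}}$ and $\Inn(h)$ an isomorphism, one gets $h\in\mathcal{M}_1$ directly, while $g\in\mathcal{E}_1$ because $\Eq(g)=\sim_N$ for the normal subgroup $N=\Ker(\Inn(f))$ of $\Inn(A)$, so that Theorem $7.1$ of \cite{BLRY} yields $\sim_N=\sim_{\Ker(\Inn(g))}$, that is $\Eq(g)=\sim_{\Ker(\Inn(g))}$.

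The core of the argument is condition (iii). Given a commutative square $v\circ e=m\circ u$ with $e\colon A\to B$ in $\mathcal{E}_1$ and $m\colon C\to D$ in $\mathcal{M}_1$, I would define the diagonal $w\colon B\to C$ as the unique factorisation of $u$ through the regular epimorphism $e$. Once $w$ is seen to exist, the relation $m\circ w=v$ follows by cancelling the epimorphism $e$ in $m\circ w\circ e=m\circ u=v\circ e$, and uniqueness of $w$ is automatic because $e$ is epic. Hence the whole matter reduces to the inclusion $\Eq(e)\subseteq\Eq(u)$, which is exactly what makes $w$ well defined.

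To establish $\Eq(e)\subseteq\Eq(u)$, take $(a,a')\in\Eq(e)=\sim_{\Ker(\Inn(e))}$ and write $a'=a^{\nu}$ with $\nu\in\Ker(\Inn(e))$. Applying the square gives $m(u(a))=v(e(a))=v(e(a'))=m(u(a'))$, and $u(a')$ and $u(a)$ lie in the same connected component of $C$. If $u$ is surjective then so is $v$ (since $v(B)=m(u(A))=m(C)=D$), the induced group homomorphisms $\Inn(u)$ and $\Inn(v)$ are defined, and the square $\Inn(m)\circ\Inn(u)=\Inn(v)\circ\Inn(e)$ commutes; from $\nu\in\Ker(\Inn(e))$ and the injectivity of $\Inn(m)$ one then reads off $\Inn(u)(\nu)\in\Ker(\Inn(m))=\{1\}$, whence $u(a')=u(a)^{\Inn(u)(\nu)}=u(a)$, as desired.

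The main obstacle is that the comparison maps $u$ and $v$ need not be surjective, so that $\Inn(u)$ and $\Inn(v)$ are not available as group homomorphisms out of $\Inn(A)$ and $\Inn(B)$, and the clean inclusion $\Inn(u)(\Ker(\Inn(e)))\subseteq\Ker(\Inn(m))=\{1\}$ cannot be invoked directly. I would deal with this by passing to the image subquandles $C_0=u(A)\subseteq C$ and $D_0=m(C_0)=v(B)\subseteq D$, over which $u$ and $v$ do become surjective, so that $\Inn(u)\colon\Inn(A)\to\Inn(C_0)$ is defined and $M:=\Inn(u)(\Ker(\Inn(e)))$ is a normal subgroup of $\Inn(C_0)$ satisfying $\sim_{M}\subseteq\Eq(m|_{C_0})$. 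The point demanding genuine care is that $m|_{C_0}\colon C_0\to D_0$ need no longer have $\Inn(m|_{C_0})$ an isomorphism, so the triviality of $M$ cannot be deduced from the restricted map alone. The mechanism I would use to force $M=\{1\}$ is the defining feature of $\mathcal{E}_1$ as encapsulated in Theorem $7.1$ of \cite{BLRY}: a congruence of the special shape $\sim_M$ contained in $\Eq(m|_{C_0})$ is saturated, and cannot sit strictly inside the rigid congruence cut out by a quotient that preserves inner automorphisms, the isomorphism $\Inn(m)$ on the ambient quandle $C$ and the permutability of $\sim_{\Ker(\Inn(e))}$ with $\Eq(u)$ from Lemma \ref{permut} being precisely what let one compare $\sim_M$ with $\Eq(m|_{C_0})$ inside $C$. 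Turning this observation into the equality $M=\{1\}$, and hence into $\Eq(e)\subseteq\Eq(u)$, is the step I expect to be the most delicate.
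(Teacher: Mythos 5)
Your verification of conditions (i) and (ii), and your treatment of condition (iii) in the case where the comparison maps $u$ and $v$ are surjective, are correct and are essentially the paper's own argument: the paper likewise applies the functor $\Inn$ to the square and uses the injectivity of $\Inn(m)$ to obtain $\Ker(\Inn(e)) \subseteq \Ker(\Inn(u))$, hence $\Eq(e) = \ \sim_{\Ker(\Inn(e))} \ \subseteq \Eq(u)$, after which the diagonal exists and is unique because $e$ is epic. (The only presentational difference is that the paper builds the diagonal by passing through the $(\mathcal{E}_1,\mathcal{M}_1)$-factorisation of the left-hand vertical arrow supplied by Proposition \ref{factY}, whereas you factor $u$ through $e$ directly; the mechanism is identical.)

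The genuine problem is your final paragraph. The statement you are trying to establish there --- orthogonality against arbitrary, possibly non-surjective, comparison maps $u$ and $v$ --- is not merely delicate: it is \emph{false}, so the step you defer (forcing $M=\{1\}$) cannot be completed by any argument. A counterexample can be assembled from the paper's own final example. Take $C = A$ (the five-element involutive quandle there, with $\rho_c = (a\,b)(d\,e)$ and all other right actions trivial), $D = X$ (the four-element quandle with $\rho_y = (z\,w)$), and $m = g \colon C \to D$ given by $g(a)=g(b)=x$, $g(c)=y$, $g(d)=z$, $g(e)=w$. The paper itself computes $\Inn(g)(\rho_c)=\rho_y$; since $\Inn(C)$ and $\Inn(D)$ are both cyclic of order two, $\Inn(m)$ is an isomorphism, i.e. $m \in \mathcal{M}_1$. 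Now let $A' = \{a,b,c\}$, a subquandle of $C$, let $e = \eta_{A'} \colon A' \to \pi_0(A') = \{[a],[c]\}$, which belongs to $\mathcal{E}_1$ by Remark \ref{permutabilityofeta}, let $u \colon A' \hookrightarrow C$ be the inclusion, and let $v \colon \pi_0(A') \to D$ send $[a] \mapsto x$ and $[c] \mapsto y$ (a homomorphism, since $\{x,y\}$ is a trivial subquandle of $D$). Then $m \circ u = v \circ e$ with $e \in \mathcal{E}_1$ and $m \in \mathcal{M}_1$, yet no $w$ with $w \circ e = u$ can exist, because $e(a)=e(b)$ while $u(a) = a \neq b = u(b)$. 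In your notation this is exactly the degenerate case $C_0 = A'$, $u_0 = 1_{A'}$: here $M = \Ker(\Inn(e)) = \Inn(A') \neq \{1\}$, and $\Inn(m|_{C_0})$ is indeed not injective --- the failure you worried about really occurs.

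The conclusion to draw is that condition (iii) for $(\mathcal{E}_1,\mathcal{M}_1)$ holds only when it is read relative to the class $\mathcal{F}$, that is, for commutative squares all of whose arrows are surjective; this is precisely why the paper's proof begins with ``consider a commutative square of surjective homomorphisms''. With that reading, your second and third paragraphs already constitute a complete proof, and the fourth should simply be discarded. Note, incidentally, that the restriction is genuinely needed only for $(\mathcal{E}_1,\mathcal{M}_1)$: the paper's proof of orthogonality for $(\mathcal{E},\mathcal{M})$ works for arbitrary $u,v$, since $\pi_0$, unlike $\Inn$, is functorial on all homomorphisms. Your counterexample above therefore exhibits a further difference between the two factorisation systems, beyond the one recorded in the paper's closing remark.
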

\begin{proof}
The first axiom in the definition of factorisation system is easy to check, while (ii) is precisely Theorem $8.1$ in \cite{BLRY} (recalled as Proposition \ref{factY} here above). To check the validity of property~(iii) consider a commutative square of surjective homomorphisms (\ref{33}) with $f\in {\mathcal E}_1$ and~$m \in {\mathcal M}_1$.
By applying the functor $\Inn$ to this commutative square we get the commutative diagram of surjective group homomorphisms 
\[\xymatrix{&\Ker(\Inn(g)) \ar[d]_-k& \\
\Ker(\Inn(f)) \ar@{.>}[ru]^\iota \ar[r]^-{k'} &\Inn(A) \ar[r]^{\Inn(f)} \ar[d]_{\Inn(g)} & \Inn(B) \ar[d]^{\Inn(h)} \\
&\Inn(C) \ar[r]_{\Inn(m)} & \Inn(D)}\] 
with $\Inn(m)$ an isomorphism. Accordingly, there is an induced \linebreak inclusion $\iota \colon \Ker(\Inn(f)) \hookrightarrow \Ker(\Inn(g))$ between the kernels such that $k \circ \iota = k'$. This induces an inclusion~$\iota' \colon \sim_{\Ker(\Inn(f))} \rightarrow \ \sim_{\Ker(\Inn(g))}$ of the corresponding kernel congruences in $\Qnd$. Using Proposition \ref{factY}, one obtains an~$({\mathcal E}_1,\mathcal{M}_1)$  factorisation~$\tilde{h} \circ \tilde{g}$ of $g$ as in the diagram
\[\xymatrix{& \sim_{\Ker(\Inn(g))} \ar@<2pt>[d] \ar@<-2pt>[d] & &\\
\sim_{\Ker(\Inn(f))} \ar@{^{(}->}[ru] \ar@<2pt>[r] \ar@<-2pt>[r] &A \ar[dr]^-{\tilde{g}} \ar[rr]^f \ar[dd]_g & & B \ar[dd]^h \ar@{.>}[dl]_-\phi\\
& &A/\sim_{\Ker(\Inn(g))} \ar[dl]^-{\tilde{h}} & \\
&C \ar[rr]_m & & D}\]
This induces a homomorphism $\phi \colon B \to A/\sim{\Ker(\Inn(g))}$ such that $\phi \circ f = \tilde{g}$. The arrow $\tilde{h} \circ \phi$ is the desired factorisation showing the orthogonality of ${\mathcal E}_1$ and ${\mathcal M}_1$.
\end{proof}

By comparing this factorisation system with the one considered in the previous section one remarks that $\mathcal{E}_1 \subset \mathcal{E}$, since $\Ker(\Inn(f)) \subset  \Inn(A)$ and, consequently,~$\mathcal{M} \subset \mathcal{M}_1$. 

\begin{remark}
We finally observe that the factorisation system $(\mathcal{E}_1,\mathcal{M}_1)$ does not have the property that $g$ belongs to ${\mathcal E}_1$ whenever
$f \circ g$ and $f$ belong to ~$\mathcal{E}_1$. This shows a difference with the factorisation system  $(\mathcal{E},\mathcal{M})$ of $\mathcal F$ in $\Qnd$ considered in the previous section (the class $\mathcal E$ obviously satisfies this property). \\
Consider the following commutative diagram of involutive quandles: 

\begin{xy}
(0,0)*+{A=\begin{tabular}{|l|c|c|c|c|c|}
  \hline
  $\lhd$ & a & b & c & d & e\\
  \hline
  a & a & a & b & a & a \\
  \hline
  b & b & b & a & b & b \\
  \hline  
  c & c & c & c & c & c \\
  \hline
  d & d & d & e & d & d \\
\hline
  e & e & e & d & e & e \\
\hline
\end{tabular}}="a"; (80,0)*+{X=\begin{tabular}{|l|c|c|c|c|}
  \hline
  $\lhd$ & x & y & z & w \\
  \hline
  x & x & x & x & x  \\
  \hline
  y & y & y & y & y  \\
  \hline  
  z & z & w & z & z  \\
  \hline
  w & w & z & w & w \\
\hline
\end{tabular} }="x";%
(40,-40)*+{M=\begin{tabular}{|l|c|c|c|}
  \hline
  $\lhd$ & $\alpha$ & $\beta$ & $\gamma$ \\
  \hline
  $\alpha$ & $\alpha$ & $\alpha$ & $\alpha$ \\
  \hline
  $\beta$ & $\beta$ & $\beta$ & $\beta$  \\
  \hline  
  $\gamma$ & $\gamma$ & $\gamma$ & $\gamma$  \\
  \hline
\end{tabular}}="m"

{\ar^-g "a"; "x"}
{\ar_-{f \circ g} "a"; "m"}
{\ar_-f "x"; "m"} 
\end{xy}

Let $g \colon A \to X$ be defined by $g(a)=g(b)=x$, $g(c) = y$, $g(d)=z$ and $g(e) = w$, and let $f \colon X \to M$ be defined by $f(x)=\alpha$, $f(y)=\beta$ and $f(z)=f(w)=\gamma$. One can show that $f= \eta_X$ and $f\circ g = \eta_A$ so that both $f$ and $g\circ f$ are in $\mathcal{E}_1$. To see that~$g$ is not in $\mathcal{E}_1$, remark that $(a,b) \in \Eq(g)$ but the only member of $\Inn(A)$ linking them is $\rho_c$ which does not belong to $\Ker(\Inn(g))$ (since $\Inn(g)(\rho_c) = \rho_y$).

\end{remark}


\end{document}